\newcommand{\arxiv}[1]{\href{http://arxiv.org/abs/#1}{arXiv:#1}}
\newcommand*{\mailto}[1]{\href{mailto:#1}{\nolinkurl{#1}}}
\newtheorem{theorem}{Theorem}[section]
\newtheorem{lemma}[theorem]{Lemma}
\newcommand{\R}{{\mathbb R}}
\newcommand{\N}{{\mathbb N}}
\newcommand{\Z}{{\mathbb Z}}
\newcommand{\C}{{\mathbb C}}
\newcommand{\M}{{\mathbb M}}
\newcommand{\nn}{\nonumber}
\newcommand{\be}{\begin{equation}}
\newcommand{\ee}{\end{equation}}
\newcommand{\bea}{\begin{eqnarray}}
\newcommand{\eea}{\end{eqnarray}}
\newcommand{\ol}{\overline}
\newcommand{\ti}{\tilde}
\newcommand{\I}{\mathrm{i}}
\newcommand{\re}{\mathrm{Re}}
\newcommand{\Rg}[1]{P^{1/2}(#1)}
\newcommand{\eps}{\varepsilon}
\newcommand{\sig}{\sigma}
\newcommand{\lam}{\lambda}
\newcommand{\gam}{\gamma}
\newcommand{\om}{\omega}
\numberwithin{equation}{section}
\begin{document}

\title[Reconstruction of the Transmission Coefficient]{Reconstruction of the Transmission Coefficient for
Steplike Finite-Gap Backgrounds}

\author[I. Egorova]{Iryna Egorova}
\address{B.Verkin Institute for Low Temperature Physics\\ 47 Lenin Avenue\\61103 Kharkiv\\Ukraine}
\email{\mailto{egorova@ilt.kharkov.ua}}

\author[G. Teschl]{Gerald Teschl}
\address{Faculty of Mathematics\\
Nordbergstrasse 15\\ 1090 Wien\\ Austria\\ and International Erwin Schr\"odinger
Institute for Mathematical Physics, Boltzmanngasse 9\\ 1090 Wien\\ Austria}
\email{\mailto{Gerald.Teschl@univie.ac.at}}
\urladdr{\url{http://www.mat.univie.ac.at/~gerald/}}

\thanks{{\it Oper. Matrices {\bf 3}, 205--214 (2009)}}
\thanks{{\it Supported by Austrian Science Fund (FWF) under Grant No.\ Y330}}

\keywords{Jacobi operators, scattering theory, periodic, Abelian integrals}
\subjclass{Primary 30E20, 30F30; Secondary 34L25, 47B36}

\begin{abstract}
We consider scattering theory for one-dimensional Jacobi operators with respect to steplike
quasi-periodic finite-gap backgrounds and show how the transmission coefficient can be reconstructed from
minimal scattering data. This generalizes the Poisson--Jensen formula for the classical constant background
case.
\end{abstract}

\maketitle

\section{Introduction}

In classical one-dimensional scattering theory the transmission coefficient can be reconstructed from
the reflection coefficient via the well-known Poisson--Jensen formula. This formula plays a crucial
role in inverse scattering theory since it shows how to compute the left scattering data from the right one
and vice versa. Moreover, it is also one of the key ingredients for deriving the associated
sum rules  which have attracted an enormous amount of interest recently (see e.g.\ \cite{ks},
\cite{lns}, \cite{npy}, \cite{sizl}, \cite{zl}). Furthermore, these sum rules are intimately connected with
conserved quantities of the associated completely integrable lattices (see \cite{tist}, \cite{tjac}).
Finally, the reconstruction formula can be viewed as the solution of a scalar Riemann--Hilbert factorization
problem which arises in the nonlinear steepest decent method \cite{dz} when deriving
the long-time asymptotics (see \cite{km}, \cite{krt}, respectively \cite{krt2}).

Moreover, the same is true in case of scattering with respect to a finite-gap background \cite{emt}, \cite{voyu}.
In this situation the analogous formula was given in \cite{tag} including the associated sum rules (see also
\cite{emt2}, \cite{emt4}, \cite{mt}). Again there is a close relation with the solution of a scalar Riemann--Hilbert
factorization problem on the underlying Riemann surface which arises in the nonlinear steepest decent method
\cite{kt}, \cite{kt2}, \cite{kt3}, and \cite{krt3}.

However, while scattering theory with a steplike constant background is classical and goes back to the early sixties
\cite{bf} (see \cite{bet} for the most recent results), even in this case the reconstruction formula is unknown to the
best of our knowledge except for the case when the two spectra overlap. This might be related to the fact that the
case where the two spectra do not overlap will not be solved in terms of elementary methods, but will already
require tools from the theory of elliptic surfaces, as we will see below. In case of steplike finite-gap backgrounds,
scattering theory is again well-understood by now \cite{emt3}, \cite{emt5}, however, the reconstruction formula is
again unknown to the best of our knowledge (except for the case of two finite-gap backgrounds in the same isospectral
class \cite{emt3}). The main purpose of our paper is to fill this gap and provide a reconstruction formula for the left/right
transmission coefficient in terms of the left/right scattering data.

\section{Notation}
\label{sec:Not}

We begin by introducing some required background from the theory of hyperelliptic curves
to be used in the remainder of this article. For further information and proofs we refer for instance
to \cite{ak}, \cite{bght}, \cite{fk}, \cite{ghmt}, or \cite{tjac}.

Let $\M$ be the Riemann surface associated with the function $\Rg{z}$,
where
\begin{equation}
P(z) = \prod_{j=0}^{2g+1} (z-E_j), \qquad E_0 < E_1 < \cdots < E_{2g+1},
\end{equation}
$g\in \N$. $\M$ is a compact, hyperelliptic Riemann surface of genus $g$.
We will choose $\Rg{z}$ as the fixed branch
\begin{equation}
\Rg{z} = -\prod_{j=0}^{2g+1} \sqrt{z-E_j},
\end{equation}
where $\sqrt{.}$ is the standard root with branch cut along $(-\infty,0)$.

A point on $\M$ is denoted by $p = (z, \pm \Rg{z}) = (z, \pm)$, $z \in \C$.
The two points at infinity are denoted by $p = \infty_{\pm}$.
We use $\pi(p) = z$ for the projection onto the extended complex plane
$\C \cup \{\infty\}$.  The points $\{(E_{j}, 0), 0 \leq j \leq 2 g+1\} \subseteq \M$ are
called branch points and the sets
\begin{equation}
\Pi_{\pm} = \{ (z, \pm \Rg{z}) \mid z \in \C\backslash \Sigma\} \subset \M,
\qquad \Sigma= \bigcup_{j=0}^g[E_{2j}, E_{2j+1}],
\end{equation}
are called upper and lower sheet, respectively. Note that the boundary of
$\Pi_\pm$ consists of two copies of $\Sigma$ corresponding to the
two limits from the upper and lower half plane.

Let $\{a_j, b_j\}_{j=1}^g$ be loops on the Riemann surface $\M$ representing the
canonical generators of the fundamental group $\pi_1(\M)$. We require
$a_j$ to surround the points $E_{2j-1}$, $E_{2j}$ (thereby changing sheets
twice) and $b_j$ to surround $E_0$, $E_{2j-1}$ counter-clockwise on the
upper sheet, with pairwise intersection indices given by
\begin{equation}
a_j \circ a_k= b_j \circ b_k = 0, \qquad a_j \circ b_k = \delta_{jk},
\qquad 1 \leq j, k \leq g.
\end{equation}
The corresponding canonical basis $\{\zeta_j\}_{j=1}^g$ for the space of
holomorphic differentials can be constructed by
\begin{equation}
\underline{\zeta} = \sum_{j=1}^g \underline{c}(j)
\frac{\pi^{j-1}d\pi}{P^{1/2}},
\end{equation}
where the constants $\underline{c}(.)$ are given by
\[
c_j(k) = C_{jk}^{-1}, \qquad
C_{jk} = \int_{a_k} \frac{\pi^{j-1}d\pi}{P^{1/2}} =
2 \int_{E_{2k-1}}^{E_{2k}} \frac{z^{j-1}dz}{\Rg{z}} \in\R.
\]
The differentials fulfill
\begin{equation}
\int_{a_j} \zeta_k = \delta_{j,k}, \qquad \int_{b_j} \zeta_k = \tau_{j,k},
\qquad \tau_{j,k} = \tau_{k, j}, \qquad 1 \leq j, k \leq g.
\end{equation}
For further information we refer to \cite{fk}, \cite[App.~A]{tjac}.

In addition, we will need Green's function (in the potential theoretic sense) of the upper
sheet $\Pi_+$:

\begin{lemma}[\cite{tag}]
The Green function of $\Pi_+$ with pole at $z_0$ is given by
\be
g(z,z_0) = - \re \int_{E_0}^p \om_{p_0 \ti{p}_0}, \quad p=(z,+),\: p_0=(z_0,+),
\ee
where $\ti{p}_0= \ol{p_0}^*$ (i.e., the complex conjugate on the other sheet)
and  $\om_{p q}$ is the normalized Abelian differential of the third kind with poles
at $p$ and $q$.
\end{lemma}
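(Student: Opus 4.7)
The plan is to verify that $h(z) := -\re \int_{E_0}^p \omega_{p_0 \tilde{p}_0}$, with $p = (z,+)$, satisfies the four defining properties of the Green function of $\Pi_+$ with pole at $p_0 = (z_0,+)$: (a) single-valuedness on $\Pi_+ \setminus \{p_0\}$, (b) harmonicity there, (c) the logarithmic singularity $h(z) = -\log|z-z_0| + O(1)$ at $p_0$, and (d) vanishing boundary values on $\Sigma = \partial \Pi_+$.

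For (a), the integral $\int_{E_0}^p \omega_{p_0 \tilde{p}_0}$ depends on the path only up to the $a$- and $b$-periods of $\omega_{p_0 \tilde{p}_0}$ together with integer multiples of $2\pi\I$ coming from small loops around $p_0$ or $\tilde{p}_0$. Taking $\re$ kills the residue ambiguity and the $a$-period ambiguity (the $a$-periods vanish by the normalization of $\omega_{p_0 \tilde{p}_0}$). To kill the $b$-period ambiguity I invoke the anti-holomorphic involution $\tau(z,\pm) = (\bar z, \mp)$ of $\M$, which fixes $\Sigma$ pointwise, interchanges $p_0 \leftrightarrow \tilde{p}_0$, and acts on the chosen canonical basis (up to the usual signs) as $\tau_* a_j = a_j$, $\tau_* b_j = -b_j$. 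By uniqueness of the normalized differential of the third kind with prescribed polar divisor, one gets $\tau^* \omega_{p_0 \tilde{p}_0} = -\overline{\omega_{p_0 \tilde{p}_0}}$, from which $\int_{b_j} \omega_{p_0 \tilde{p}_0} \in \I\R$. Property (b) is then automatic since $h$ is locally the real part of a holomorphic function.

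For (c), the local expansion $\omega_{p_0 \tilde{p}_0} = (z - z_0)^{-1} dz + (\text{holomorphic})$ near $p_0$ gives $\int_{E_0}^p \omega_{p_0 \tilde{p}_0} = \log(z - z_0) + (\text{holomorphic})$, so $h(z) = -\log|z-z_0| + (\text{harmonic})$, as required. For (d), for $p \in \Sigma$ choose a path from $E_0$ to $p$ lying entirely in $\Sigma$; since $\Sigma$ is the fixed-point set of $\tau$, the identity $\tau^* \omega_{p_0 \tilde{p}_0} = -\overline{\omega_{p_0 \tilde{p}_0}}$ forces $\omega_{p_0 \tilde{p}_0}\big|_\Sigma$ to be purely imaginary, so the integral along $\Sigma$ is purely imaginary and $h \equiv 0$ on $\Sigma$.

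The main obstacle is the symmetry identity $\tau^* \omega_{p_0 \tilde{p}_0} = -\overline{\omega_{p_0 \tilde{p}_0}}$, which packages together the $\tau$-invariance of the polar divisor (residues flip sign because $\tau$ swaps the two poles), the normalization of the differential, and the action of $\tau$ on homology. Once this is established, both the imaginarity of the $b$-periods (needed for single-valuedness of $h$) and the imaginarity of the restriction to $\Sigma$ (needed for the boundary values) follow from the same computation, and the remaining verifications are routine local expansions.
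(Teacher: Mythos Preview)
The paper itself does not prove this lemma; it is quoted from \cite{tag}, so there is no in-paper argument to compare against. Your verification of the four characterizing properties via the anti-holomorphic involution $\tau$ is the standard route and is essentially correct.

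One step needs a small repair. In (d), the fixed-point set $\pi^{-1}(\Sigma)$ of $\tau$ is not connected: it consists of $g+1$ disjoint circles, one over each band $[E_{2j},E_{2j+1}]$. Hence a path from $E_0$ to a boundary point $p$ lying over a band other than $[E_0,E_1]$ cannot remain in $\pi^{-1}(\Sigma)$, and your argument as written only covers the first band. The fix is immediate from your own symmetry identity: since $\tau(E_0)=E_0$ and $\tau^*\omega_{p_0\tilde p_0}=-\overline{\omega_{p_0\tilde p_0}}$, applying $\tau$ to any path from $E_0$ to $p$ yields
\[
\int_{E_0}^{\tau(p)}\omega_{p_0\tilde p_0}=-\overline{\int_{E_0}^{p}\omega_{p_0\tilde p_0}}
\]
modulo periods you have already shown to be purely imaginary; thus $h\circ\tau=-h$ globally, and $h\equiv 0$ on the entire fixed set $\partial\Pi_+$.
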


Clearly, we can extend $g(z,z_0)$ to a holomorphic function on $\M\backslash\{p_0\}$
by dropping the real part. By abuse of notation we will denote this function by
$g(p,p_0)$ as well. However, note that $g(p,p_0)$ will be multivalued with
jumps in the imaginary part across $b$-cycles. We will choose the path
of integration in $\C\backslash[E_0,E_{2g+1}]$ to guarantee a single-valued
function.

From the Green function we obtain the Blaschke factor (cf.\ \cite{tag})
\be
B(p,\rho)= \exp \Big( g(p,\rho) \Big) = \exp\Big(\int_{E_0}^p \om_{\rho \rho^*}\Big),
\qquad \pi(\rho)\in\R,
\ee
which has the following properties:

\begin{lemma}\label{lem:blaschke}
The Blaschke factor satisfies
\be
B(E_0,\rho)=1, \quad\mbox{and}\quad
B(p^*,\rho) = B(p,\rho^*) = B(p,\rho)^{-1};
\ee
it is real-valued for $\pi(p)\in(-\infty,E_0)$.

Moreover,
\be
|B(p,\rho)|=1, \quad p \in \Sigma, \qquad
\arg(B(p,\rho)) = \delta_j(\rho), \qquad \pi(p)\in[E_{2j-1},E_{2j}],
\ee
where we set $E_{-1}=-\infty$, $E_{2g+2}=\infty$ and
\be
\delta_j(\rho) = \begin{cases}
0, & j=0,\\
\frac{1}{2} \int_{b_j} \om_{\rho \rho^*}, & j=1,\dots,g,\\
0, & j=g+1.
\end{cases}
\ee
\end{lemma}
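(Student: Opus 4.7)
The plan is to prove the five identities in sequence by exploiting the symmetries of $\omega_{\rho\rho^*}$ under the sheet involution $*$ and complex conjugation. The first, $B(E_0,\rho)=1$, is immediate from $\int_{E_0}^{E_0}\omega_{\rho\rho^*}=0$.

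For the two involution identities in (ii) I would invoke the uniqueness of the normalized Abelian differential of the third kind twice. Swapping $\rho\leftrightarrow\rho^*$ negates the residues while preserving the $a$-normalization, so $\omega_{\rho^*\rho}=-\omega_{\rho\rho^*}$, which immediately gives $B(p,\rho^*)=B(p,\rho)^{-1}$. The sheet involution $*$ sends each $a_k$ to its reverse and swaps the poles, hence $*\omega_{\rho\rho^*}=-\omega_{\rho\rho^*}$; using $E_0^*=E_0$, the substitution $q\mapsto q^*$ in the defining integral yields $B(p^*,\rho)=B(p,\rho)^{-1}$. Real-valuedness on $(-\infty,E_0)$ follows from an analogous argument: when $\pi(\rho)\in\mathbb{R}$ the poles $\rho,\rho^*$ are invariant under complex conjugation, so $\overline{\omega_{\rho\rho^*}}=\omega_{\rho\rho^*}$ on the real axis outside $\Sigma$, and integrating along a real path from $E_0$ to $p$ produces a real value. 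For $|B(p,\rho)|=1$ on $\Sigma$ I would use that on the cut, approaching from the upper half-plane versus the lower interchanges sheets, so $\bar p=p^*$ on $\Sigma$; combined with the conjugation identity $\overline{B(p,\rho)}=B(\bar p,\rho)$ and (ii), this gives $\overline{B(p,\rho)}=B(p^*,\rho)=B(p,\rho)^{-1}$, whence $|B|^2=1$.

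For the argument formula on the $j$-th gap ($1\le j\le g$) I would split $\int_{E_0}^p\omega_{\rho\rho^*}=\int_{E_0}^{E_{2j-1}}\omega_{\rho\rho^*}+\int_{E_{2j-1}}^p\omega_{\rho\rho^*}$, with the first integration along a path on the upper sheet and the second along the real gap. Writing the $b_j$-cycle as the concatenation of this upper-sheet path with its $*$-image (traversed in reverse), the identity $*\omega_{\rho\rho^*}=-\omega_{\rho\rho^*}$ forces the two halves to contribute equally, giving $\int_{E_0}^{E_{2j-1}}\omega_{\rho\rho^*}=\frac{1}{2}\int_{b_j}\omega_{\rho\rho^*}=\delta_j(\rho)$. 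The remaining integral along the gap involves $\omega_{\rho\rho^*}$ restricted to the real axis, where by conjugation symmetry it is real; this piece therefore contributes only to $|B|$ and not to the argument. The cases $j=0,g+1$ reduce to (iii), since the entire path may be taken on the real axis outside $[E_0,E_{2g+1}]$ and $\delta_0=\delta_{g+1}=0$.

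The main obstacle is making precise the geometric decomposition of the $b_j$-cycle into two $*$-symmetric halves and tracking it through the single-valued branch chosen in $\mathbb{C}\setminus[E_0,E_{2g+1}]$; once the $*$-antisymmetry of $\omega_{\rho\rho^*}$ is used to halve the $b_j$-integral, the entire lemma reduces to a sequence of involution manipulations.
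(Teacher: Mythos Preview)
Your approach is essentially correct and parallels the paper's, but the paper takes a more concrete route for the argument formula. Rather than invoking uniqueness of normalized third-kind differentials to deduce $*\omega_{\rho\rho^*}=-\omega_{\rho\rho^*}$ and $\overline{\omega_{\rho\rho^*}}=\omega_{\rho\rho^*}$ abstractly, the paper reads off directly from the explicit expression \eqref{ompps} that $\omega_{\rho\rho^*}$ is real-valued on $\pi^{-1}(\R\setminus\Sigma)$ and purely imaginary on $\pi^{-1}(\Sigma)$. It then integrates along the lift of the real axis: the imaginary part is constant on each gap (since the integrand is real there), and on the $j$-th gap it equals the accumulated imaginary contribution from the bands $[E_0,E_1],\dots,[E_{2j-2},E_{2j-1}]$, which is half the $b_j$-period. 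The first block of identities, including $|B|=1$ on $\Sigma$, is simply cited from \cite{tag}.

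The practical difference is exactly the obstacle you flag. Your claim $\int_{E_0}^{E_{2j-1}}\omega_{\rho\rho^*}=\tfrac12\int_{b_j}\omega_{\rho\rho^*}$ is not literally true for the real-axis path (the intermediate gaps contribute nonzero real parts), so you need a path $\gamma$ for which $\gamma\cup(*\gamma)^{-1}$ is genuinely homologous to $b_j$ modulo $a$-cycles, and you must then check this path is consistent with the single-valued branch fixed in $\C\setminus[E_0,E_{2g+1}]$. The paper's use of the explicit formula sidesteps this: once you know the integrand is real on gaps and purely imaginary on bands, the identification of the imaginary part with $\tfrac12\int_{b_j}\omega_{\rho\rho^*}$ follows from shrinking $b_j$ to the slit (gap contributions cancel, band contributions double). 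Your symmetry argument buys generality and conceptual clarity; the paper's buys a cleaner bookkeeping of the branch.
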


\begin{proof}
The first part including the fact that $|B(p,\rho)|=1$, $p \in \Sigma$, is proven in \cite[Lem.~3.3]{tag}.
To see the formula for the argument first observe that $\om_{\rho \rho^*}$ is
real-valued on $\pi^{-1}(\R\backslash\Sigma)$ and purely imaginary on $\pi^{-1}(\Sigma)$. This
can be seen from the explicit expression (\ref{ompps}) for $\om_{\rho \rho^*}$ given below.
Hence, taking the path of integration along the lift of the real axis, we see that
the integral is real for $p=(\lam,\pm)$ with $\lam<E_0$ or $\lam>E_{2g+1}$. For
$p=(\lam,\pm)$ with $\lam\in[E_{2j-1},E_{2j}]$ the imaginary part is constant and given by
half the $b_j$ period.
\end{proof}

The above Abelian differential is explicitly given by
\be
\om_{p q} = \left( \frac{P^{1/2} + \Rg{p}}{2(\pi - \pi(p))} -
\frac{P^{1/2} + \Rg{q}}{2(\pi - \pi(q))} + P_{p q}(\pi) \right)
\frac{d\pi}{P^{1/2}},
\ee
where $P_{p q}(z)$ is a polynomial of degree $g-1$ which has to be determined from
the normalization $\int_{a_\ell} \om_{p p^*}=0$. In particular,
\be\label{ompps}
\om_{p p^*} = \left( \frac{\Rg{p}}{\pi - \pi(p)} + P_{p p^*}(\pi) \right)
\frac{d\pi}{P^{1/2}}.
\ee

\section{Reconstructing the transmission coefficient}

Let $H_q^\pm$ be two quasi-periodic finite-band Jacobi operators,\footnote{Everywhere in this
paper the sub or super index "$+$" (resp.\ "$-$") refers to the background on the right
(resp.\ left) half-axis.}
\be
H_q^\pm f(n) = a_q^\pm(n) f(n+1) + a_q^\pm(n-1) f(n-1) + b_q^\pm(n) f(n),
\quad f\in\ell^2(\Z),
\ee
associated with the hyperelliptic Riemann surface of the square root
\begin{equation}\label{defP}
P_\pm^{1/2}(z)= -\prod_{j=0}^{2g_\pm+1} \sqrt{z-E_j^\pm}, \qquad
E_0^\pm < E_1^\pm < \cdots < E_{2g_\pm+1}^\pm,
\end{equation}
where $g_\pm\in \N$ and $\sqrt{.}$ is the standard root with branch
cut along $(-\infty,0)$. In fact, $H_q^\pm$ are uniquely determined by
fixing a Dirichlet divisor $\sum_{j=1}^{g^\pm} \hat\mu_j^\pm$, where
$ \hat\mu_j^\pm=(\mu_j^\pm, \sig_j^\pm)$ with
$\mu_j^\pm\in[E_{2j-1}^\pm,E_{2j}^\pm]$ and $\sig_j^\pm\in\{-1, 1\}$.
The spectra of $H_q^\pm$ consist of $g_\pm+1$ bands
\be \label{0.1}
\sig_\pm:=\sig(H_q^\pm) = \bigcup_{j=0}^{g_\pm}
[E_{2j}^\pm,E_{2j+1}^\pm].
\ee
We are interested in scattering theory for the operator
\be \label{0.2}
H f(n)=a(n-1) f(n-1) + b(n) f(n)+ a(n) f(n+1), \quad n\in\Z,
\ee
whose coefficients are asymptotically close to the
coefficients of $H_q^\pm$ on the corresponding half-axes:
\be \label{hypo}
\sum_{n = 0}^{\pm \infty} |n| \Big(|a(n) -
a_q^\pm(n)| + |b(n) - b_q^\pm(n)| \Big) < \infty.
\ee
Let $\psi_q^\pm(z,n)$ be the Floquet solutions of the spectral equations
\be\label{0.12}
H_q^\pm \psi(n)=z \psi(n), \qquad z\in\C,
\ee
that decay for $z\in\C\setminus\sig_\pm$ as $n\to \pm\infty$.
They are uniquely defined by the condition $\psi_q^\pm(z,0)=1$,
$\psi_q^\pm(z,\cdot)\in\ell^2(\Z_\pm)$. The solution $\psi_q^+(z,n)$
(resp.\ $\psi_q^-(z,n)$)  coincides with the upper (resp.\ lower)
branch of the Baker--Akhiezer functions of $H_q^+$ (resp.\ $H_q^-$),
see \cite{tjac}.

The two solutions $\psi_\pm(z,n)$ of the spectral
equation
\be \label{0.11}
H \psi = z \psi, \quad z\in\C,
\ee
which are asymptotically close to the Floquet solutions $\psi_q^\pm(z,n)$ of the background
equations (\ref{0.12}) as $n\to\pm\infty$, are called Jost solutions.

Next, we introduce the sets
\be
\sig^{(2)}=\sig_+\cap\sig_-, \quad
\sig_\pm^{(1)}=\ol{\sig_\pm\setminus\sig^{(2)}}, \quad
\sig=\sig_+\cup\sig_-,
\ee
where $\sig$ is the (absolutely) continuous
spectrum of $H$ and $\sig_+^{(1)}\cup \sig_-^{(1)}$, $\sig^{(2)}$ are the parts which are of multiplicity
one, two, respectively.

In addition to the continuous part, $H$ has a finite number of
eigenvalues situated in the gaps, $\sig_d=\{\lam_1,...,\lam_s\} \subset \R \setminus \sig$
(see, e.g., \cite{tosc}).
For every eigenvalue we introduce the corresponding norming constants
\be
\gam_{\pm, k}^{-1}=\sum_{n \in\Z} |\psi_\pm(\lam_k, n)|^2, \quad 1 \leq k \leq s.
\ee
Note that this definition has to be slightly modified in the unlikely event that $\psi_q^\pm(z, n)$ and
hence $\psi_\pm(z, n)$ has a pole at $z=\lam_k$ (see \cite{emt5} for details).
The transmission and reflection coefficients are defined as usual via the scattering relations
\be
T_\mp(\lam) \psi_\pm(\lam,n) = \overline{\psi_\mp(\lam,n)} + R_\mp(\lam)\psi_\mp(\lam,n),
\quad\lam\in\sig_\mp.
\ee
Here the values of $\psi_\pm(\lam,n)$ for $\lam\in\sig_\pm$ are to be understood as limits from above
$\psi_\pm(\lam,n) = \lim_{\eps\downarrow 0} \psi_\pm(\lam+\I\eps,n)$ (the corresponding limits from
below just give the complex conjugate values $\ol{\psi_\pm(\lam,n)} = \lim_{\eps\downarrow 0}
\psi_\pm(\lam-\I\eps,n)$).

The following result is an immediate consequence of \cite[Lem.~5.1]{emt5}.

\begin{theorem}[\cite{emt5}]
Suppose $a(n)$, $b(n)$ satisfy \eqref{0.2}, then $a(n)$, $b(n)$ are uniquely determined by
one of the sets of its ``partial" scattering data ${\mathcal S}_+$ or ${\mathcal S}_-$, where
\begin{align}\nn
\mathcal{S}_\pm = \Big\{ & R_\pm(\lam),\, \lam\in\sig_\pm; \,
|T_\pm(\lam)|^2,\, \lam\in\sig_\mp^{(1)};\\
& \lam_1,\dots,\lam_s\in\R\setminus (\sig_+\cup\sig_-),\,
\gam_{\pm,1},\dots,\gam_{\pm,s}\in\R_+\Big\}.
\end{align}
\end{theorem}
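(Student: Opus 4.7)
The plan is to reduce to the Gelfand--Levitan--Marchenko inversion scheme for steplike finite-gap backgrounds established in \cite{emt5}. Write the right Jost solution as $\psi_+(z,n) = \sum_{m\geq n} K_+(n,m)\,\psi_q^+(z,m)$, so that the transformation kernel $K_+(n,m)$ satisfies a discrete Marchenko integral equation whose only free datum is an inhomogeneity $F_+(n,m)$. Recovering $K_+$ from $F_+$ is a routine Fredholm argument in a weighted $\ell^1$-space once the decay hypothesis \eqref{hypo} is in force, and $a(n),b(n)$ on the right half-line can be read off from $K_+(n,n)$ and $K_+(n,n+1)$; extension to all of $\Z$ then follows from the three-term recurrence together with the known asymptotic behaviour on the right.

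The content of the theorem is therefore the identification $F_+ = F_+[\mathcal{S}_+]$. I would spectrally decompose $F_+(n,m)$, splitting the continuous part according to $\sig = \sig_+ \cup \sig_-^{(1)}$. On $\sig_+$, the standard computation produces a kernel involving $R_+(\lam)\,\psi_q^+(\lam,n)\,\psi_q^+(\lam,m)$ weighted by the background spectral measure of $H_q^+$. On the simple part $\sig_-^{(1)}$, where $\psi_+(\lam,\cdot)$ decays exponentially as $n\to+\infty$, the scattering relation $T_-\psi_+ = \ol{\psi_-} + R_-\psi_-$ combined with the Wronskian identity between the Jost and Floquet solutions collapses the contribution to an integral of $|T_+(\lam)|^2\,\psi_q^+(\lam,n)\,\psi_q^+(\lam,m)$. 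The discrete spectrum contributes the finite sum $\sum_k \gam_{+,k}\,\psi_q^+(\lam_k,n)\,\psi_q^+(\lam_k,m)$. These are precisely the ingredients of $\mathcal{S}_+$, and this identification is what is proved in \cite[Lem.~5.1]{emt5}.

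The main obstacle is the simple-spectrum step: verifying that on $\sig_-^{(1)}$ only the single real quantity $|T_+|^2$ survives, while the phase of $T_+$ and the auxiliary datum $R_-(\lam)$ drop out of $F_+$. This asymmetry of roles of $T_\pm$ and $R_\pm$ on the simple part of the spectrum is exactly what makes the reduction to partial scattering data possible and motivates the reconstruction problem for $T_\pm$ from $\mathcal{S}_\pm$ which is the subject of this paper. The argument for $\mathcal{S}_-$ is entirely symmetric, interchanging the roles of the two half-axes and working with $K_-(n,m)$ instead.
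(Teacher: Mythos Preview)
The paper does not prove this theorem; it simply records it as an immediate consequence of \cite[Lem.~5.1]{emt5}. Your sketch is a faithful outline of precisely that argument---the Gel'fand--Levitan--Marchenko scheme in which the kernel $F_+(n,m)$ is assembled from $R_+$ on $\sig_+$, from $|T_+|^2$ on $\sig_-^{(1)}$, and from the norming constants at the eigenvalues, after which unique solvability of the Marchenko equation yields $K_+$ and hence the coefficients.

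One small correction: the Marchenko equation is posed and uniquely solvable for \emph{every} $n\in\Z$, not only on a right half-line, so $a(n),b(n)$ are recovered for all $n$ directly from $K_+(n,n)$ and $K_+(n,n+1)$. Your proposed continuation ``from the right half-line to all of $\Z$ via the three-term recurrence together with the known asymptotic behaviour on the right'' does not actually work---the recurrence run leftward requires dividing by $a(n-1)$, which you do not yet know, and right-side asymptotics give no initial data on the left. This is only a phrasing issue, since the GLM machinery you invoke already delivers the coefficients globally.
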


This leads to the natural question if there is a simple way to compute $\mathcal{S}_+$
from $\mathcal{S}_-$ and vice versa (i.e., without solving the inverse scattering problem).
It turns out that this question reduces to the reconstruction of the transmission coefficient
$T_\pm(z)$ from $\mathcal{S}_\pm$. In fact, this follows from the following lemma.

\begin{lemma}[\cite{emt5}]
The transmission coefficients $T_\pm(z)$ admit a meromorphic extension to $\C\backslash\sig$.
In general they have simple poles at the eigenvalues $\lam_k$ of $H$.
In addition, there are simple poles at $\mu_j^\pm \in \R\backslash\sig_\pm$ which
are not poles of $\psi_q^\pm(z,1)$ (i.e., $\sig^\pm_j = \mp 1$) and simple
zeros at $\mu_j^\mp\in \R\backslash\sig_\mp$ which are poles of $\psi_q^\mp(z,1)$ (i.e., $\sig^\mp_j = \mp 1$).
A pole at $\mu_j^\pm$ could cancel with a zero at $\mu_j^\mp$ or could give a second order pole if
$\mu_j^\pm=\lam_k$.

Moreover, the entries of the scattering matrix have the following properties:
\[
\begin{array}{lcl}
{\bf (a)} & \rho_+(z) T_+(z) = \rho_-(z) T_-(z),\\
{\bf (b)}&\displaystyle{\frac{T_\pm(\lam)}{\overline{T_\pm(\lam)}}} = R_\pm(\lam),&
\lam\in\sig_\pm^{(1)},\nn\\
{\bf (c)}& 1 - |R_\pm(\lam)|^2 =
\displaystyle{\frac{\rho_\pm(\lam)}{\rho_\mp(\lam)}}\,|T_\pm(\lam)|^2,&\lam\in\sig^{(2)},\\
{\bf (d)}&\overline{R_\pm(\lam)}T_\pm(\lam) +
R_\mp(\lam)\overline{T_\pm(\lam)}=0,& \lam\in\sig^{(2)},
\end{array}
\]
where
\be
\rho_\pm(z)=\frac{\prod_{j=1}^{g_\pm}(z-\mu_j^\pm)}{ P_\pm^{1/2}(z)}.
\ee
\end{lemma}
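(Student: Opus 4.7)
The plan is to derive Wronskian representations of $T_\pm(z)$ from which the meromorphic extension, the pole/zero structure, and the algebraic identities (a)--(d) all follow. Denote by $W(f,g)(n) = a(n)\bigl(f(n)g(n+1) - f(n+1)g(n)\bigr)$ the usual $n$-independent Wronskian of solutions of $H\psi = z\psi$, and by $\psi_\pm^*(z,n)$ the analytic continuation of $\psi_\pm$ through $\sig_\pm$, equal to the boundary value $\overline{\psi_\pm(\lam,n)}$ from below on $\sig_\pm$. Taking the Wronskian of the scattering relation with $\psi_\pm$ yields, up to a conventional sign,
\be
T_\pm(z) \;=\; \frac{W(\psi_\pm^*, \psi_\pm)(z)}{W(\psi_-, \psi_+)(z)}.
\ee
Since the Jost solutions $\psi_\pm(z,n)$ are meromorphic on $\C\backslash\sig_\pm$ --- a property inherited from the Baker--Akhiezer functions $\psi_q^\pm$ via a discrete Volterra equation that converges under the trace-class hypothesis \eqref{hypo} --- this formula automatically extends $T_\pm$ meromorphically to $\C\backslash\sig$.

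Next, I would analyze poles and zeros. The denominator $W(\psi_-, \psi_+)(z)$ has simple zeros precisely at the eigenvalues $\lam_k$, because a zero forces linear dependence of the two $\ell^2$-decaying Jost solutions and produces an eigenfunction. The numerator $W(\psi_\pm^*, \psi_\pm)(z)$, evaluated asymptotically as $n\to\pm\infty$ where both factors become Baker--Akhiezer functions, is a nonvanishing analytic multiple of $1/\rho_\pm(z)$ on $\C\backslash\sig_\pm$, so it has simple poles exactly at the Dirichlet eigenvalues $\mu_j^\pm$. A priori each such point would give a simple pole of $T_\pm$; however, at those $\mu_j^\pm$ where $\psi_q^\pm(z,1)$ is singular the Jost solution $\psi_\pm$ itself has a simple pole, which enters the denominator $W(\psi_-,\psi_+)$ and cancels the offending factor. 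The surviving poles of $T_\pm$ are therefore exactly the $\mu_j^\pm$ with $\sig_j^\pm = \mp 1$ together with the eigenvalues $\lam_k$. Finally, zeros of $T_\pm$ arise from the uncancelled poles of $W(\psi_-, \psi_+)$ coming from $\psi_\mp$, producing simple zeros at $\mu_j^\mp$ with $\sig_j^\mp = \mp 1$, together with the announced coincidences (cancellation between a pole at $\mu_j^\pm$ and a zero at $\mu_j^\mp$, or a second-order pole when $\mu_j^\pm = \lam_k$).

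The algebraic identities follow directly. Identity (a) is obtained by dividing the two Wronskian representations: the common denominator drops out and the quotient $W(\psi_+^*,\psi_+)/W(\psi_-^*,\psi_-) = \rho_-(z)/\rho_+(z)$ follows from the identification above, yielding $\rho_+ T_+ = \rho_- T_-$. For (b), on each component of $\sig_\pm^{(1)}$ the opposite Jost solution $\psi_\mp(\lam,\cdot)$ is real-valued (analytic in $z$, real on an interval disjoint from $\sig_\mp$); conjugating the scattering relation and matching coefficients of the pair $\{\psi_\pm, \overline{\psi_\pm}\}$ yields $R_\pm = T_\pm/\overline{T_\pm}$. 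For (c) and (d) on $\sig^{(2)}$, I would compute $W(\psi_\pm, \overline{\psi_\pm})(\lam)$ in two ways: from its asymptotic background value $W(\psi_q^\pm, \overline{\psi_q^\pm}) \propto \rho_\pm(\lam)\,\im P_\pm^{1/2}(\lam)$ on the appropriate half-axis, and by substituting the scattering relation into one of the two factors; equating the two expressions produces exactly (c) and (d).

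The main obstacle is the careful bookkeeping in the pole/zero analysis of the second paragraph. A Dirichlet point $\mu_j^\pm$ may coincide with an eigenvalue $\lam_k$ (producing a second-order pole) or with a Dirichlet point $\mu_j^\mp$ of the opposite background (producing a cancellation between a pole and a zero), and correctly threading the sheet-sign data $\sig_j^\pm$ through the cancellations between the Jost solutions and the factor $1/\rho_\pm$ is the delicate step that forces the exceptional clauses in the lemma. Once this bookkeeping is done, the identities (a)--(d) follow as essentially routine consequences of the Wronskian framework.
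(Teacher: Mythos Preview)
The paper does not prove this lemma at all: it is quoted verbatim from \cite{emt5} and used as input for the main reconstruction theorem. So there is no in-paper argument to compare against. Your sketch is the standard Wronskian route and is, in outline, exactly how the result is established in \cite{emt5}: one writes
\[
T_\pm(z)=\frac{W(\overline{\psi_\pm},\psi_\pm)}{W(\psi_-,\psi_+)},
\]
identifies the numerator with the background Wronskian $W(\overline{\psi_q^\pm},\psi_q^\pm)$ (a nonzero multiple of $1/\rho_\pm$), reads off the meromorphic continuation and the pole/zero structure from the zeros of $W(\psi_-,\psi_+)$ and the poles of $\psi_\pm$, and obtains (a)--(d) by the Wronskian manipulations you indicate. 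Your identification of the ``delicate step'' (matching the sheet signs $\sigma_j^\pm$ to decide which $\mu_j^\pm$ survive as poles, which cancel, and which collide with an eigenvalue) is accurate; that bookkeeping is precisely what \cite{emt5} carries out in detail.
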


Hence, the problem is to reconstruct the meromorphic function $T_+(z)$, $z\in\C\backslash\sig$ from its boundary
values
\be
\begin{cases}
|T_+(\lam)|^2, & \lam\in\sig_-^{(1)},\\
|T_+(\lam)|^2 = \frac{\rho_-(\lam)}{\rho_+(\lam)} (1 - |R_+(\lam)|^2), & \lam\in\sig^{(2)},\\
\displaystyle\frac{T_+(\lam)}{\overline{T_+(\lam)}} = R_+(\lam), & \lam\in \sig_+^{(1)}.
\end{cases}
\ee
That is, we know its absolute value on $\sig_-$ and its argument on the rest $\sig_+^{(1)}$. There will be
three Riemann surfaces involved, the one corresponding to $\sig=\sig_+\cup\sig_-$ and the ones corresponding to
$\sig_\pm$. All objects corresponding to $\sig$ will be denoted as in Section~\ref{sec:Not}, while the objects
associated with $\sig_\pm$ will have an additional $\pm$ sub/supscript.

\begin{theorem}
The transmission coefficient $T_+(z)$ can be reconstructed from the reflection
coefficient $R_+(z)$ and the eigenvalues $\lam_j$ via
\begin{align}\nn
T_+(z) = & \Bigg( \prod_{\mu_j^-\in M^-} B_-(z,\mu^-_j) \Bigg) \Bigg( \prod_{\mu_j^+\in M^+} B_-(z,\mu^+_j)^{-1} \Bigg)
\Bigg( \prod_{k=1}^s B_-(z,\lam_k)^{-1} \Bigg) \times\\
& \exp\Bigg(\frac{Q(z)^{-1}}{\pi \I} \int_{\sig_-^{(1)}}  Q \log(|T_+|) \om_{z z^*}\\ \nn
& + \frac{Q(z)^{-1}}{2\pi \I} \int_{\sig^{(2)}}  Q\, \big( \log\big(\frac{\rho_-}{\rho_+}\big) +\log (1 - |R_+|^2)\big) \om_{z z^*} \\
& + \frac{Q(z)^{-1}}{2\pi} \int_{\sig_+^{(1)}} Q\, \big(\arg(R_+) + \delta^-\big) \om_{z z^*} \Bigg),
\end{align}
where the integrals are taken over the lift of the indicated spectra to the upper sheet $\Pi_u$ (of the Riemann surface associated
with $\sig$). Moreover, we use the convention that we identify $z$ with $(z,+)$, and similarly for $\lam_k$, $\mu^\pm_j$, whenever used
in the argument of a function defined on a Riemann surface. Here
\be
M^\pm = \{\mu_j^\pm \,|\, \mu_j^\pm\in\R\setminus\sig \mbox{ and } \sig_j^\pm=-1 \},
\ee
\be
Q(z) = \prod_j \sqrt{z-e_j}, \quad\text{where $e_j$ are defined via}\quad \bigcup_j [e_{2j},e_{2j+1}] = \sig_+^{(1)},
\ee
and
\be
\delta^-(\lam)= \sum_\ell \delta^-_\ell \chi_{[E_{2\ell-1}^-,E_{2\ell}^-]}(\lam)
\ee
with (cf.\ Lemma~\ref{lem:blaschke})
\be
\delta^-_\ell = - \sum_{\mu_j^-\in M^-} \delta^-_\ell(\mu^-_j) + \sum_{\mu_j^+\in M^+} \delta^-_\ell(\mu^+_j) +
\sum_{k=1}^s \delta^-_\ell(\lam_k).
\ee
\end{theorem}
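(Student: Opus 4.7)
The plan is to generalize the classical Poisson--Jensen approach to the hyperelliptic Riemann surface $\M$ associated with $\sig=\sig_+\cup\sig_-$, converting the reconstruction problem into a scalar Schwarz problem on $\M$ whose kernel is the Abelian differential $\om_{zz^*}$. The classical algorithm (remove zeros via Blaschke products, then solve a Poisson integral for the remaining outer factor) survives almost verbatim; the two new ingredients are that the Blaschke factors must live on the Riemann surface of $\sig_-$ (to preserve $|T_+|$ on $\sig_-$), and that the mixed modulus/argument data must be scalarised by an extra factor $Q(z)$ before the Poisson step.

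First I would remove the interior singularities of $T_+$. By the lemma on the meromorphic extension, $T_+$ has in $\C\setminus\sig$ only simple poles at the eigenvalues $\lam_k$ and at the points $\mu_j^+\in M^+$, and simple zeros at the points $\mu_j^-\in M^-$. Using that $B_-(z,\rho)$ has a simple zero at $z=\pi(\rho)$ when $\pi(\rho)\in\R\setminus\sig_-$, the modified function
\[
\tilde T_+(z) := T_+(z)\prod_{\mu_j^-\in M^-} B_-(z,\mu_j^-)^{-1} \prod_{\mu_j^+\in M^+} B_-(z,\mu_j^+)\prod_{k=1}^s B_-(z,\lam_k)
\]
is holomorphic and non-vanishing on $\C\setminus\sig$, so $\log\tilde T_+$ is well defined up to $2\pi\I\Z$. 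By Lemma~\ref{lem:blaschke} one has $|B_-(\cdot,\rho)|=1$ on $\sig_-$, and $\arg B_-(\cdot,\rho)$ is the piecewise constant $\delta^-_\ell(\rho)$ on the $\ell$-th gap of $\sig_-$. Since $\sig_+^{(1)}\subset\R\setminus\sig_-$ lies entirely in those gaps, the boundary data of $\log\tilde T_+$ split cleanly: on $\sig_-$ its real part equals $\log|T_+|$ (known from $\mathcal S_+$ by properties (b),(c) of the scattering lemma), while on $\sig_+^{(1)}$ its imaginary part equals $\arg T_+ + \delta^- = \tfrac12\arg R_+ + \delta^-$, with $\delta^-$ as in the statement (using $T_+/\overline{T_+}=R_+$).

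The second step is to convert this mixed data into pure real-part data by multiplying with $Q(z)$. Because $Q$ is real on $\R\setminus\sig_+^{(1)}$ and purely imaginary on $\sig_+^{(1)}$, the function $F(z):=Q(z)\log\tilde T_+(z)$ is holomorphic on $\C\setminus\sig$ and has prescribed real boundary values on all of $\sig$: $\re F = Q\log|T_+|$ on $\sig_-$, and $\re F = -\im(Q)\cdot(\tfrac12\arg R_+ + \delta^-)$ on $\sig_+^{(1)}$. Now $F$ is recovered from $\re F$ by the Schwarz representation on $\Pi_+$, whose kernel is the third-kind differential $\om_{zz^*}$ (normalized by $a$-period zero, with poles at $z,z^*$): schematically $F(z)=\tfrac{1}{\pi\I}\int_\sig (\re F)\,\om_{zz^*}$, up to an imaginary additive constant fixed by the behavior at $\infty_+$. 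Substituting the two pieces of $\re F$, dividing by $Q(z)$, exponentiating, and using $|T_+|^2=(\rho_-/\rho_+)(1-|R_+|^2)$ on $\sig^{(2)}$ from property (c) to rewrite $\log|T_+|$ there as $\tfrac12(\log(\rho_-/\rho_+)+\log(1-|R_+|^2))$ produces exactly the claimed formula; the three prefactors $\tfrac{1}{\pi\I},\tfrac{1}{2\pi\I},\tfrac{1}{2\pi}$ arise respectively from a direct $\re F$ contribution, the $\tfrac12$ from $\log|T_+|^2$, and the extra $\I$ coming from $Q$ being purely imaginary on $\sig_+^{(1)}$.

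The technical heart of the argument is the last step: justifying the Schwarz representation on $\M$ with the correct normalization, and ensuring single-valuedness of the right-hand side as $z$ traverses a closed loop. The $b$-periods of $\om_{zz^*}$ contribute imaginary constants that must precisely cancel against the $\delta^-$-shift produced in the Blaschke step; it is this rigidity that forces $\delta^-$ to be the specific linear combination of Blaschke angles of the removed poles and zeros. One must also verify that the apparent singularities of $Q(z)^{-1}$ at the branch points $e_j\in\partial\sig_+^{(1)}$ are cancelled by corresponding vanishings of the Schwarz integral there, and that the large-$z$ behavior of $T_+$ fixes any additive constant in the representation of $F$. These normalisation and single-valuedness checks are the main obstacles, and are what convert the heuristic "Poisson--Jensen on $\M$" into a closed formula.
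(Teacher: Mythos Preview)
Your approach is essentially the paper's: remove the interior zeros and poles of $T_+$ with Blaschke factors $B_-$ on the $\sig_-$-surface, multiply $\log$ of the result by $Q$ to turn the mixed modulus/argument boundary data into pure real-part data on $\partial\Pi_u$, and recover via the Green/Schwarz kernel $\om_{zz^*}$. The only substantive difference is in the final justification. You propose to verify single-valuedness by hand (checking that the $b$-periods of $\om_{zz^*}$ cancel against the $\delta^-$ shifts), to check cancellation of the $Q^{-1}$-singularities at the $e_j$, and to pin down the additive constant from the behaviour at $\infty_+$. The paper bypasses all of this: once the candidate formula is written down, it simply observes that by the Voichick--Zalcman theorem on inner/outer factorisation on Riemann surfaces both sides have the same absolute value on $\Pi_u$, so they differ by a unimodular constant, and that constant is fixed to $1$ because both sides are real for real $z$ to the left of $\sig$. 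Your route would work but is considerably more laborious; the paper's shortcut is what makes the argument a few lines rather than a few pages.
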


\begin{proof}
We start by considering the multivalued function
\be
t_+(z) =
\Bigg( \prod_{\mu_j^-\in M^-} B_-(z,\mu^-_j)^{-1} \Bigg) \Bigg( \prod_{\mu_j^+\in M^+} B_-(z,\mu^+_j) \Bigg)
\Bigg( \prod_{k=1}^s B_-(z,\lam_k) \Bigg) T_+(z)
\ee
which has neither zeros nor poles on $\Pi_u$ and satisfies
\be
\begin{cases}
|t_+(\lam)|^2 = |T_+(\lam)|^2, & \lam\in\sig_-,\\
\arg(t_+(\lam)) = \arg(T_+(\lam)) + \delta^-_\ell, & \lam\in\sig_+^{(1)} \cap [E_{2\ell-1}^-,E_{2\ell}^-].
\end{cases}
\ee
Moreover, the absolute value of $t_+(z)$ is single-valued and hence its logarithm is a
harmonic function on $\Pi_u$ which can be reconstructed from its boundary values.
To accommodate the fact that we know its absolute value on $\sig_-$ and its argument
on $\sig_+^{(1)}$ we consider
\be
Q(z) \log(t_+(z)).
\ee
Note that since $t_+(z)$ might still have zeros and poles on $\sigma$, the function $\log(t_+(z))$ might have
logarithmic singularities on $\sigma$.

Since $Q(\lam)$ is real-valued for $\lam\in\R\backslash \sig_+^{(1)}$ and purely imaginary for $\lam\in\sig_+^{(1)}$,
we infer that the real part of $Q(z) \log(t_+(z))$ is harmonic on $\Pi_u$ and can be reconstructed from
its boundary values
\be
\re\big(Q(\lam) \log(t_+(\lam))\big) = \begin{cases}
Q(\lam) \log(|T_+(\lam)|), & \lam\in\sig_-,\\
\I Q(\lam) \big(\arg(T_+(\lam)) + \delta^-_\ell\big), & \lam\in\sig_+^{(1)} \cap [E_{2\ell-1}^-,E_{2\ell}^-],
\end{cases}
\ee
using Green's function:
\begin{align}\nn
\re\big(Q(z) \log(t_+(z))\big) = &\re\Bigg(\frac{1}{\pi \I} \int_{\sig_-}  Q \log(|T_+|) \om_{z z^*}\\
&  + \frac{1}{\pi} \int_{\sig_+^{(1)}} Q \big(\arg(T_+) + \delta^-\big) \om_{z z^*} \Bigg).
\end{align}
Dropping the real part we get
\begin{align}\nn
T_+(z) = & \Bigg( \prod_{\mu_j^-\in M^-} B_-(z,\mu^-_j) \Bigg) \Bigg( \prod_{\mu_j^+\in M^+} B_-(z,\mu^+_j)^{-1} \Bigg)
\Bigg( \prod_{k=1}^s B_-(z,\lam_k)^{-1} \Bigg) \times\\
& \exp\Bigg(\frac{Q(z)^{-1}}{\pi \I} \int_{\sig_-}  Q \log(|T_+|) \om_{z z^*}
 + \frac{Q(z)^{-1}}{\pi} \int_{\sig_+^{(1)}} Q \big(\arg(T_+) + \delta^-\big) \om_{z z^*} \Bigg).
\end{align}
In fact, by \cite[Thm.~1]{voza} both the left-hand and the right-hand side have the same absolute value and hence can only differ by a constant
with absolute value one (in particular, the right-hand side is single-valued since the left-hand side is). This  constant must be
one since both sides are real-valued for real-valued $z$ to the left of $\sig$.
\end{proof}

\bigskip

\noindent{\bf Acknowledgments.} We want to thank Peter Yuditskii for helpful discussions.

\end{document}